\long\def\beginpgfgraphicnamed#1#2\endpgfgraphicnamed{\includegraphics{#1}} 
\newtheoremstyle{plainsl}%
	{\topsep}
	{\topsep}
	{\slshape} 
	{}
	{\normalfont\bfseries}
	{.}
	{ }
	{}
\newtheorem{theorem}{Theorem}[section]
\newtheorem{lemma}[theorem]{Lemma}
\newcommand\cref[1]{Corollary~\ref{cor:#1}}
\def\sqr#1#2{{\vbox{\hrule height.#2pt
    \hbox{\vrule width.#2pt height#1pt \kern#1pt
        \vrule width.#2pt}\hrule height.#2pt}}}
\def\eqed{\sqr53}
\def\qed{%
    \ifmmode\eqno\eqed
    \else\nobreak\ \hfill\eqed\medbreak\fi}
\begin{document}

\title{Graphs whose flow polynomials have only integral roots}
\author{Joseph P.S. Kung}
\author{Gordon F. Royle}

\address{Department of Mathematics\\ University of North Texas\\
Denton TX 76203\\ USA}
\email{kung@unt.edu}

\address{School of Mathematics and Statistics
\\ University of Western Australia\\ Nedlands WA 6009\\ Australia}
\email{gordon@maths.uwa.edu.au}

\subjclass[2000] {Primary 05B35;
Secondary 05C15}

\keywords{Graphs, flow polynomials, flow roots, cographic matroids}


\begin{abstract}
We show if the flow polynomial of a bridgeless graph $G$ has only integral roots,
then $G$ is the dual graph to a planar chordal graph. We also show that for 3-connected cubic graphs, the same conclusion holds under the weaker hypothesis that it has only real flow roots. Expressed in the language of matroid theory, this result says that the  cographic matroids with only integral characteristic roots are the cycle matroids of planar chordal graphs.
\end{abstract}

\maketitle

\section{Introduction}

For each different type of polynomial associated with a graph or matroid,
a natural and usually well-studied question is to determine if and when
the polynomial factors completely over the integers, or equivalently,
has only integer roots. For example, consider the {\em chromatic polynomial} $P(G;\lambda),$ which is defined to be the number of ways of properly coloring the vertices of the graph $G$ with at
most $\lambda$ colors.  The {\em chromatic roots} of $G$ are the roots of the chromatic polynomial of $G,$  and it has been a long-standing open question to characterize the graphs with integral chromatic roots. Chordal graphs, which are defined to be
graphs with no induced cycles of length greater than $3$, have integral chromatic roots, but there are also many non-chordal graphs with this property (see \cite{MR1802609, MR1489869, MR1887942, MR2190791}), and a complete characterization seems difficult, and perhaps even impossible.

The polynomial dual to the chromatic polynomial is the {\em flow polynomial}
$F(G;\lambda),$ defined to be the number of nowhere-zero flows on the graph $G$ taking values in an abelian group of order $\lambda$ (see Tutte \cite{MR0061366},  Brylawski and Oxley \cite{MR1165543}). The roots
of $F(G;\lambda)$ are called the {\em flow roots} of $G,$ and in this paper we
characterize the graphs with integral flow roots. As the chromatic polynomial of a planar graph is the flow polynomial of its dual (up to a factor of a power of $\lambda$), the duals of planar chordal graphs provide obvious examples of graphs with integral flow roots. Using an inequality for coefficients of
polynomials with real roots, an algebraic argument (first used in \cite{MR1900006}) to extract information from the coefficients of the flow polynomial, and a product  formula
 from matroid theory, we show that a graph with integral flow roots is the dual of a planar chordal graph. Loosely speaking, our main result shows that the obvious examples are the {\em only} examples.

\begin{theorem} \label{graphmain}
If $G$ is a bridgeless graph, then its flow roots 
are integral if and only if $G$ is the dual of 
a planar chordal graph. 
\end{theorem}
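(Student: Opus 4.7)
The plan has an easy and a hard direction. For sufficiency, a chordal graph $H$ with perfect elimination ordering $v_1,\ldots,v_n$ satisfies $P(H;\lambda)=\prod_{i=1}^n(\lambda-d_i)$, where $d_i$ counts the neighbours of $v_i$ that come later in the ordering; when $H$ is planar and connected, $P(H;\lambda)=\lambda F(H^{*};\lambda)$, so $F(H^{*};\lambda)$ factors into integer linear factors. For necessity I would work at the level of the cographic matroid $M^{*}(G)$, whose characteristic polynomial equals $F(G;\lambda)$ up to a factor of $\lambda-1$ absorbed on the graphic side; the goal is to show that integrality of the characteristic roots of $M^{*}(G)$ forces $M^{*}(G)$ to be the cycle matroid of a planar chordal graph.

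The first step is a reduction to $3$-connected cographic matroids via matroid product formulas for the characteristic polynomial across $1$-sums and $2$-sums. Both decompositions preserve integrality of roots on each factor, so I may assume $M^{*}(G)$ is $3$-connected. Writing
\[
F(G;\lambda)=\sum_{k=0}^{r}(-1)^k w_k\,\lambda^{r-k},
\]
where $r$ is the cyclomatic number of $G$, the broken-cocircuit theorem for $M^{*}(G)$ yields non-negative integers $w_k$ with explicit combinatorial meaning in terms of bonds of $G$ of small size. Integrality of all roots implies Newton's inequalities
\[
w_k^2\ \ge\ \frac{k+1}{k}\cdot\frac{r-k+1}{r-k}\,w_{k-1}w_{k+1},
\]
and following the algebraic manoeuvre of \cite{MR1900006}, I would combine these, applied to $w_1,w_2,w_3$ (which admit closed forms in terms of the edge count and the number of $3$-bonds of $G$), into a single sharp numerical relation among these invariants and $r$ that must hold with equality once every flow root is an integer.

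The main obstacle is converting that equality into the required structural conclusion, and this really has two parts. First, I would show that any $3$-connected non-planar $G$ carries enough extra bonds or enough extra circuit structure to strictly violate the extracted relation, thereby ruling out any $3$-connected cographic-but-not-graphic $M^{*}(G)$ and forcing $G$ to be planar, so that $G=H^{*}$ for some planar $H$. Second, once $H$ is planar, applying the same equality to $P(H;\lambda)=\lambda F(G;\lambda)$ should pinpoint a simplicial vertex of $H$, after which induction on $|V(H)|$ produces a perfect elimination ordering and hence chordality. I expect the harder half to be the non-planar exclusion: one must show that the smallest obstructions (duals of $K_5$ and $K_{3,3}$, together with the generic $3$-connected non-planar case) strictly fail the inequality, which likely requires direct computation of their flow polynomials combined with a minor-based bootstrapping argument to handle the general case.
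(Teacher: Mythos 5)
Your setup (sufficiency via perfect elimination orderings and planar duality, necessity via coefficient inequalities for real-rooted polynomials applied to the cographic matroid, following the same prior algebraic manoeuvre the paper cites) starts in the right place, but two steps in the necessity argument have genuine gaps. First, the reduction to $3$-connected cographic matroids ``via matroid product formulas across $1$-sums and $2$-sums'' fails for $2$-sums: the characteristic polynomial of a $2$-sum is not the product of the characteristic polynomials of its parts, so integrality of the roots of $M^{\perp}(G)$ does not descend to the two sides of a $2$-separation. The paper only reduces modulo direct sums and simplification (parallel pairs in the cocycle matroid, i.e.\ $2$-edge-cutsets of $G$), works thereafter with $2$-vertex-connected, $3$-edge-connected graphs, and the decomposition it actually exploits is along minimal $3$-edge-cutsets, which are modular $3$-point lines of $M^{\perp}(G)$ and for which Brylawski's generalized parallel connection formula does give a clean product (Lemma~\ref{flowcut}).

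Second, and more seriously, your plan for converting the coefficient information into structure --- show that every $3$-connected non-planar $G$ ``strictly violates'' the extracted relation, then locate a simplicial vertex in the planar dual --- has no mechanism behind it. Integrality of the roots does not force equality in Newton-type inequalities; what it yields is a one-sided bound, namely an upper bound on the second coefficient (the paper's $b_2$) and hence a \emph{lower} bound on the number of $3$-bonds of $G$ (Lemmas~\ref{threecircuits} and~\ref{threecircuits2}). Non-planarity is simply not visible in $w_1,w_2,w_3$, and flow roots are not hereditary under taking minors, so a ``minor-based bootstrapping'' from the duals of $K_5$ and $K_{3,3}$ cannot be made to work. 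The paper's route is different: the lower bound on the number of $3$-bonds exceeds the number $v_3$ of vertex-stars of degree-$3$ vertices, so $G$ must contain a \emph{proper} minimal $3$-cutset; splitting along it, applying induction to the two contracted pieces, and gluing their planar chordal duals along a triangular face delivers planarity and chordality simultaneously --- planarity is an output of the induction, not the result of excluding Kuratowski obstructions numerically. You would also need the integer-root refinement of the symmetric-function bound (Lemma~\ref{rootbound2}) to handle non-cubic graphs, and a separate argument for the extremal case $\chi(M;\lambda)=(\lambda-1)(\lambda-2)^{r-1}$, where every minimal $3$-cutset may be improper and $G$ turns out to be the dual of a $2$-tree (Lemma~\ref{2tree}).
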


We note that  Theorem~\ref{graphmain} implies the theorem of Dong and Koh \cite{MR1489869} that planar graphs with integral chromatic roots are chordal.

Simple planar chordal graphs have a very restricted structure.
A $2$-connected planar chordal graph is constructed by starting from a
triangle $K_3$ and then repeatedly joining a new vertex either to both ends of an edge or to three vertices of a triangular face. The operation of joining a new vertex to an edge creates a $2$-vertex-cutset which persists throughout any
subsequent operations, and so the graph is $3$-connected if and only if it
arises from the complete graph $K_4$ by repeatedly inserting a vertex of degree $3$ into a face. Thus the $3$-connected planar chordal graphs form a very special class of triangulations, in fact precisely the class of uniquely 4-colorable planar graphs (Fowler \cite{fowler98}). At the other extreme are the graphs obtained from a triangle by using only the {\em first} operation of joining a new vertex to an edge (i.e., never creating a $K_4$). These graphs are called {\em $2$-trees} and it is well known that they are maximal series-parallel graphs with respect to edge addition.

We develop and present our results in the more general context of matroid
theory because the chromatic and flow polynomials of graphs are just the
characteristic polynomials of specific classes of matroids, whereas the main
ideas in our proof apply in general. Furthermore, there are various other
natural classes of matroids where it may be possible to characterize the
matroids whose characteristic polynomials have only integer roots.
We briefly discuss questions and conjectures of this nature in Section~\ref{supersolvable}.


\section{Preliminaries}

Recall that the {\em characteristic polynomial} $\chi(M;\lambda)$ of a matroid $M$ is defined in the following way:
if $M$ has a rank-$0$ element, then $\chi(M;\lambda) = 0$ and if
$M$ has no rank-$0$ elements, it is defined by
$$
\chi(M;\lambda) = \sum_{X: X \in L(M)}
\mu(\emptyset,X) \lambda^{\mathrm{rank}{M} - \mathrm{rank}{X}},
$$
where $L(M)$ is the lattice of flats of $M$ and $\mu$ is its 
M\"obius function
(see \cite{MR0174487}).
We call the roots of $\chi(M;\lambda)$ the {\em characteristic roots} of $M.$
If $M$ has no rank-$0$ elements, then the characteristic polynomial of $M$
depends only on its lattice of flats.  The {\em simplification}
of a matroid $M$ is the matroid obtained from $M$ by removing all
rank-$0$ elements and deleting all but one element in each rank-$1$ flat.
The lattice of flats is unchanged
under simplification; hence, if a matroid starts off with no rank-$0$ elements,
the characteristic polynomial is also unchanged.

Chromatic and flow polynomials of graphs are special cases of characteristic polynomials of matroids:
indeed, $P(G;\lambda) = \lambda^c\chi(M(G);\lambda),$
where $M(G)$ is the cycle matroid of $G$ and $c$ is the number of connected components in $G,$ and
$F(G;\lambda) = \chi(M^{\perp}(G);\lambda),$ where $M^{\perp}(G),$ the cocycle
matroid of $G,$ is the dual of $M(G).$
In particular, note that the flow polynomial $F(G;\lambda)$
depends only on the simplification of $M^{\perp}(G).$

 A {\em cutset} $C$ in a graph $G$ is  a set of edges such that $G - C$ has more connected components than $G$. A {\em bridge} in a graph is a cutset of size $1$, and if $G$ has a bridge, then its flow polynomial is identically zero. In matroid terms, the cocycle matroid $M^\perp(G)$ has a rank-0 element and so its characteristic polynomial is identically zero. To avoid this degenerate case, we henceforth consider only bridgeless graphs. 

If $G$ has no bridges, but is disconnected or has a cut-vertex, 
then it is either the disjoint union of two smaller graphs $G^{\prime}$ and $G^{\prime\prime}$ or it is obtained by identifying a vertex of  $G^{\prime}$ with a vertex of $G^{\prime\prime}$. In either case, the flow polynomial of $G$ is determined purely by the flow polynomials
of $G^{\prime}$ and $G^{\prime\prime}$:
\begin{equation}\label{directsum}
F(G;\lambda) =F(G^{\prime};\lambda) F(G^{\prime\prime};\lambda).
\end{equation}
This situation causes no difficulty however because it is easy to see that if $G^{\prime}$ and $G^{\prime\prime}$ are the duals of planar chordal graphs, then so is $G$. In matroid terms, the cocycle matroid $M^\perp(G)$ is disconnected and equal to the direct sum $M^\perp(G^{\prime}) \oplus M^\perp(G^{\prime\prime})$. 

If $G$ is $2$-vertex-connected, but has a $2$-cutset, then its flow polynomial is
unchanged if one of the edges in the cutset is contracted, and this process can be repeated until the graph is $3$-edge-connected.
A vertex of degree $2$ necessarily yields a 2-cutset, but not all $2$-cutsets arise in this manner.
In matroid terms, any 2-cutset corresponds to a series pair in the cycle matroid $M(G)$ and hence a
parallel pair in the cocycle matroid $M^\perp(G)$. Therefore the process of repeatedly contracting an edge in a $2$-cutset until no $2$-cutsets
remain is just simplifying the cocycle matroid. It proves convenient for us to work with simple matroids,
but it is important to note that this implies the main result even if the original matroid is not simple.
To see this, suppose that the simplification of $M^\perp(G)$ is the cycle matroid
of a simple planar chordal graph $H$. Then $M^\perp(G)$ is the cycle matroid
of the graph obtained from $H$ adding some edges in parallel to existing edges.
As this process does not alter planarity or the property of being chordal,
the resulting graph is still planar and chordal, though no longer simple.

A crucial step in the proof of Theorem \ref{graphmain} is to show that certain minimal $3$-cutsets exist in any graph whose flow polynomial has integer roots.
Minimal $3$-cutsets have rank $2$ and are closed; hence,
they form a $3$-point line in $M^{\perp}(G).$ In a $3$-edge-connected graph $G$, we call a minimal $3$-cutset {\em proper} if its deletion separates $G$ into
disjoint subgraphs $G^{\prime}$ and $G^{\prime\prime}$, each containing
at least one edge. An improper $3$-cutset necessarily consists of the $3$
edges incident on a vertex $v$ of degree $3$.

Since deletion in the graph $G$ corresponds to contraction in the
cocycle matroid $M^{\perp}(G),$ a proper $3$-cutset $L$
induces the following (non-trivial) separation in the cocycle matroid:
$$
M^{\perp}(G \backslash L) =
M^{\perp}(G) \big/ L = M^{\perp}(G^{\prime})
\oplus M^{\perp}(G^{\prime\prime}).
$$
In turn, this separation induces a product formula for flow polynomials.

\begin{lemma} \label{flowcut}
Suppose that the graph $G$ has a minimal $3$-cutset $L.$  Let
$G_1$ and $G_2$ be the two graphs $G \big/ G^{\prime\prime}$ and
$G \big/ G^{\prime}$ obtained by contracting each side of the
cutset to a single vertex. Then
\begin{equation}\label{flow3cut}
F(G;\lambda) = \frac{F(G_1;\lambda) F(G_2;\lambda)}{(\lambda-1)(\lambda-2)}.
\end{equation}
\end{lemma}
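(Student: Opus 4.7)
The plan is to derive (\ref{flow3cut}) by recognizing the decomposition of $M^\perp(G)$ across the modular flat $L$ and invoking the standard product formula for characteristic polynomials across such a decomposition.

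First, I would translate the geometric data into matroid terms. Because $G_i$ is obtained from $G$ by (graph-theoretic) contraction of one side of the cutset, it follows that $M^\perp(G_1) = M^\perp(G) \setminus E(G'')$ and $M^\perp(G_2) = M^\perp(G) \setminus E(G')$, both of which contain the common 3-point line $L$ as a restriction. Combined with the identity $M^\perp(G)/L = M^\perp(G')\oplus M^\perp(G'')$ already recorded in the excerpt, this expresses $M^\perp(G)$ as the generalized parallel connection $P_L(M^\perp(G_1),M^\perp(G_2))$ of $M^\perp(G_1)$ and $M^\perp(G_2)$ across $L$.

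Second, I would verify that $L$ is a modular flat of $M^\perp(G_1)$ (and, by symmetry, of $M^\perp(G_2)$). A rank-$2$ flat is modular if and only if it meets every hyperplane, and the hyperplanes of $M^\perp(G_i)$ are the complements of cocircuits of $M^\perp(G_i)$, that is, the complements of the cycles of $G_i$. So modularity of $L$ amounts to showing that no cycle of $G_i$ contains all three edges of $L$. But $L$ remains a minimal edge-cut of $G_i$ --- it is the star of the vertex $v^*$ to which the other side was contracted --- and every cycle meets every edge-cut in an even number of edges, so no cycle of $G_i$ can contain more than two edges of $L$.

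Third, I would apply Brylawski's product formula for the characteristic polynomial of a generalized parallel connection across a modular flat:
\[
\chi\bigl(P_L(M_1,M_2);\lambda\bigr)\,\chi(L;\lambda) \;=\; \chi(M_1;\lambda)\,\chi(M_2;\lambda).
\]
A direct M\"obius computation on the lattice of flats of the three-point line gives $\chi(L;\lambda) = (\lambda-1)(\lambda-2)$. Since the flow polynomial of a bridgeless graph is the characteristic polynomial of its cocycle matroid, rearranging yields (\ref{flow3cut}).

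The only substantive obstacle is the first step --- formally identifying $M^\perp(G)$ with the generalized parallel connection $P_L(M^\perp(G_1),M^\perp(G_2))$ rather than merely a matroid with a compatible ground-set decomposition. Once this structural fact is pinned down, modularity of $L$ reduces to cut--cycle parity, and the remainder is a direct appeal to the standard matroid product formula.
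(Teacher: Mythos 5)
Your argument is essentially the paper's own justification: the authors give no proof of Lemma~\ref{flowcut}, remarking only that it is a special case of Brylawski's product formula for the generalized parallel connection across a modular flat (citing Brylawski, Brylawski--Oxley, and Lemma 29 of Jackson's survey), which is exactly the route you take. Your cut--cycle parity verification that $L$ is modular in $M^\perp(G_i)$ and the identification of $M^\perp(G)$ with $P_L\bigl(M^\perp(G_1),M^\perp(G_2)\bigr)$ correctly fill in the details the paper delegates to those references.
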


\noindent
We remark that an improper $3$-cutset associated with a vertex $v$
may induce a non-trivial separation.  This
occurs if and only if when $v$ and its incident edges are deleted,
the resulting graph has a new cut-vertex.

\begin{figure}
\begin{center}
\beginpgfgraphicnamed{fig-flow3cut}
\begin{tikzpicture}
\tikzstyle{vertex}=[circle, fill=white, draw=black, inner sep = 0.07cm]
\node[vertex] (v0) at (0,0) {};
\node[vertex] (v1) at (2,0) {};
\node[vertex](v2) at (3.5,0){};
\node[vertex] (v3) at (3.5,2) {};
\node[vertex] (v4) at (2,2) {};
\node[vertex] (v5) at (0,2) {};
\node[vertex] (v6) at (3,1.5){};
\node[vertex] (v7) at (0.5,1.5){};
\node [vertex] (v8) at (1,1){};
\node [vertex] (v9) at (1.5,1){};
\draw (v1)--(v8)--(v9)--(v1);
\draw (v4)--(v8);
\draw (v4)--(v9);
\draw (v0)--(v7)--(v4);
\draw (v5)--(v7);
\draw (v0)--(v1)--(v2)--(v3)--(v4)--(v5)--(v0);
\draw (v2)--(v6)--(v4);
\draw (v3)--(v6);
\draw (v1)--(v4);

\draw [thick, dashed] (2.5,-0.25)--(2.5,2.25);

\pgftransformxshift{5cm}
\node[vertex] (v0) at (0,0) {};
\node[vertex] (v1) at (2,0) {};
\node[vertex](v2) at (3,1){};
\node[vertex] (v3) at (3,1) {};
\node[vertex] (v4) at (2,2) {};
\node[vertex] (v5) at (0,2) {};
\node[vertex] (v6) at (3,1){};
\node[vertex] (v7) at (0.5,1.5){};
\node [vertex] (v8) at (1,1){};
\node [vertex] (v9) at (1.5,1){};
\draw (v1)--(v8)--(v9)--(v1);
\draw (v4)--(v8);
\draw (v4)--(v9);
\draw (v0)--(v7)--(v4);
\draw (v5)--(v7);
\draw (v0)--(v1)--(v2)--(v3);
\draw (v4)--(v5)--(v0);
\draw (v2)--(v6);
\draw (v4) .. controls (2.7,1.7) and (2.7,1.7) .. (v6);
\draw (v3) .. controls (2.3,1.3) and (2.3,1.3) .. (v4);
\draw (v3)--(v6);
\draw (v1)--(v4);
\end{tikzpicture}
\endpgfgraphicnamed
\end{center}
\caption{Graph $G$ with a 3-edge cutset and corresponding graph $G_1$}
\label{fig-3cutset}
\end{figure}
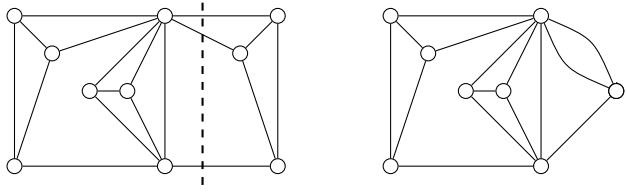

The formula in Lemma~\ref{flowcut} is a special case of a matroid formula of Brylawski for generalized parallel connection over a modular flat (see
\cite{MR0357163} and \cite{MR1165543}, p.~205), and it appeared in this form as Lemma 29 in Jackson's survey \cite{MR2005532}. Figure~\ref{fig-3cutset} shows a graph $G$ with a minimal 3-edge cutset, and the graph $G_1$ formed by contracting one side of the cut. In this case, $G_2$ is the graph $K_4$ which has flow polynomial $(\lambda-1)(\lambda-2)(\lambda-3)$, and 
\begin{equation*}
F(G;\lambda) = F(G_1;\lambda) (\lambda-3) = (\lambda-1)(\lambda-2)^3(\lambda-3)^2(\lambda^3-5\lambda^2+9\lambda-7).
\end{equation*}

The reader may recognize the formula \eqref{flow3cut} as the flow analogue of the formula for the chromatic polynomial of the ``join'' of two graphs at a triangle, which has a simple counting proof. 
 Lemma~\ref{flowcut} can be proved in several ways, in particular, by a routine contraction-and-deletion argument.  It is a little harder to give a direct counting argument analogous to the chromatic polynomial case, but this can be done by considering flows with values in the direct product $\mathbb{Z}_2^m$ of $m$ copies of the integers modulo $2,$ exploiting the fact that the flow polynomial depends only on the order of the group and not its structure.
 
\section{Polynomials with only real roots}

We will use the following easy result about polynomials. This simple lemma is surely known but we are unable to find a reference.

\begin{lemma}
\label{rootbound}
Let
$$
p(\lambda)
= \lambda^n - a_1 \lambda^{n-1} + a_2 \lambda^{n-2} + \cdots
+ (-1)^m a_m \lambda^{n-m} + \cdots + (-1)^n a_n
$$
be a polynomial of degree $n$ with positive real roots
$\lambda_1, \lambda_2, \ldots, \lambda_n$ and let
$$
\bar{\lambda} =
\frac {\lambda_1 + \lambda_2 + \cdots + \lambda_n}{n} =  \frac {a_1}{n}.
$$
Then
$$
a_m \le \binom {n}{m} \bar{\lambda}^{m}.
$$
Equality occurs for any one index $m,$ where $2 \le m \le n,$
if and only if $p(\lambda) = (\lambda - \bar{\lambda})^n.$
\end{lemma}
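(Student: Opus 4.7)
The plan is to apply Vieta's formulas to identify $a_m$ with the $m$-th elementary symmetric polynomial $e_m(\lambda_1,\ldots,\lambda_n)$ in the roots, and then show that on the set of positive tuples with fixed arithmetic mean $\bar\lambda$, the function $e_m$ attains its maximum only at the point where all $\lambda_i=\bar\lambda$, where its value is exactly $\binom{n}{m}\bar\lambda^m$. This is a standard consequence of Maclaurin's inequality, but a short self-contained argument is available via pairwise smoothing.

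The key step is the following smoothing identity. Fix two indices $i\ne j$ and hold the remaining $n-2$ roots constant. Splitting the $m$-subsets of $\{1,\ldots,n\}$ according to how they intersect $\{i,j\}$ yields
\[ e_m \;=\; \lambda_i\lambda_j \, e_{m-2}^{(ij)} \;+\; (\lambda_i+\lambda_j)\, e_{m-1}^{(ij)} \;+\; e_m^{(ij)}, \]
where $e_k^{(ij)}$ denotes the $k$-th elementary symmetric polynomial in the other $n-2$ variables. Because the remaining roots are strictly positive and $n-2\ge m-2$, we have $e_{m-2}^{(ij)}>0$ whenever $m\ge 2$. For fixed $\lambda_i+\lambda_j$ the right-hand side is therefore a strictly increasing function of the product $\lambda_i\lambda_j$, and that product is maximized when $\lambda_i=\lambda_j$. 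Hence replacing any unequal pair $(\lambda_i,\lambda_j)$ by two copies of their average strictly increases $e_m$ while preserving $\bar\lambda$.

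Iterating this smoothing (or invoking compactness on the closed simplex $\lambda_i\ge 0$, $\sum\lambda_i=n\bar\lambda$) shows that $e_m$ is uniquely maximized when $\lambda_1=\cdots=\lambda_n=\bar\lambda$, at which point its value equals $\binom{n}{m}\bar\lambda^m$. This gives both the inequality $a_m\le\binom{n}{m}\bar\lambda^m$ and the equality clause: for $m\ge 2$ any unequal pair in $(\lambda_1,\ldots,\lambda_n)$ would allow a strict smoothing, forcing all $\lambda_i$ equal to $\bar\lambda$ in the equality case and hence $p(\lambda)=(\lambda-\bar\lambda)^n$. There is no genuine obstacle; the only subtlety is that the smoothing is strict precisely when $m\ge 2$, which is exactly the range in which the equality characterization is claimed — for $m=1$ the bound is already an equality by the definition of $\bar\lambda$.
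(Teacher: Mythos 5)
Your proposal is correct and follows essentially the same route as the paper: both identify $a_m$ with the elementary symmetric function $e_m$ of the roots and use a pairwise smoothing step, decomposing $e_m$ according to how each monomial meets $\{\lambda_i,\lambda_j\}$ (your identity $e_m=\lambda_i\lambda_j e_{m-2}^{(ij)}+(\lambda_i+\lambda_j)e_{m-1}^{(ij)}+e_m^{(ij)}$ is exactly the paper's three kinds of terms) and applying AM--GM to the $\lambda_i\lambda_j$ part. Your version is marginally more careful in noting explicitly that $e_{m-2}^{(ij)}>0$ guarantees strictness for $m\ge 2$, but the argument is the same.
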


\begin{proof}
The coefficient $a_m$ is given by the elementary symmetric function of degree $m$ evaluated at the roots:
\begin{equation*}
a_m = e_m(\lambda_1, \lambda_2, \ldots, \lambda_n) = \sum_{1 \leq i_1 <  i_2 < \cdots < i_m \leq n} \lambda_{i_1} \lambda_{i_2} \cdots \lambda_{i_m}.
\end{equation*}

To prove the lemma, it suffices to show that if $m \geq 2$ and two of the roots, say $\lambda_1$ and $\lambda_2$, are not
equal and $\nu = \frac {1}{2} (\lambda_1 + \lambda_2),$  then
$$
e_m(\nu,\nu,\lambda_3, \ldots,\lambda_n)
> e_m(\lambda_1,\lambda_2,\lambda_3, \ldots,\lambda_n).
$$ 
There are three kinds of terms in the sum for
$e_m(\lambda_1,\lambda_2,\ldots,\lambda_n).$
The first are those terms not containing $\lambda_1$ or $\lambda_2$, which are unchanged when we change $\lambda_1$ and $\lambda_2$ to $\nu.$
The second are those terms containing exactly one of $\lambda_1$ or $\lambda_2.$
These terms come in pairs,
$ \lambda_1 \lambda_{i_2} \cdots \lambda_{i_m} $ and
$ \lambda_2 \lambda_{i_2} \cdots \lambda_{i_m}. $
The sum of the two terms in each pair is unchanged when $\nu$ is substituted for
$\lambda_1$ and $\lambda_2.$ The third kind of terms are those containing both $\lambda_1$ and $\lambda_2.$
Then by the arithmetic-geometric mean inequality,
$$
\nu^2 =   \frac {(\lambda_1 + \lambda_2)^2}{4} \ge \lambda_1  \lambda_2,
$$
where the inequality is strict when $\lambda_1 \ne \lambda_2$. Therefore these terms strictly increase when $\nu$ is substituted for $\lambda_1$ and $\lambda_2$.
\end{proof}

The proof of Lemma~\ref{rootbound} can be easily adapted to prove a variation.

\begin{lemma}\label{rootbound2}
Suppose that the polynomial $p(\lambda) $ in the previous lemma has
$n$ positive integer roots
$\lambda_1, \lambda_2, \ldots, \lambda_n$ such that $\bar{\lambda}$ is not an integer.
Let $\lambda_* = \lfloor \bar{\lambda} \rfloor,$
$\lambda^* = \lceil \bar{\lambda} \rceil,$ and $\delta$ be the positive integer 
such that
$$
n \bar{\lambda} =
(n- \delta) \lambda^* + \delta \lambda_*.
$$
Then the coefficient $a_m$ is at most the value of the degree-$m$
symmetric function
evaluated with $n- \delta$ variables set to $\lambda^*$ and $\delta$ variables set to $\lambda_*.$
In particular,
$$
a_2 \le \binom {n- \delta}{2} {\lambda^*} ^2 +
(n- \delta ) \delta \lambda^* \lambda_* +
\binom {\delta}{2} \lambda_*^2.
$$
Equality occurs if and only if $p(\lambda) =
(\lambda - \lambda^*)^{n- \delta}  (\lambda - \lambda_*)^{\delta}.$
\end{lemma}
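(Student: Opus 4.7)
The plan is to replace the continuous averaging step in the proof of Lemma~\ref{rootbound} with a discrete one-unit exchange between two integer roots, and to iterate it until the multiset of roots is ``flattened'' onto at most two consecutive integers. The terminal configuration of this process will, by its uniqueness, be precisely the one claimed in the statement.

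The key exchange step I would prove is the following: if two roots satisfy $\lambda_j \geq \lambda_i + 2$, then replacing the pair $(\lambda_i,\lambda_j)$ by $(\lambda_i+1,\lambda_j-1)$ strictly increases $e_m$ for every $m$ with $2 \leq m \leq n$, while preserving the sum $\sum_k \lambda_k$. To verify this, I would split the monomials of $e_m(\lambda_1,\ldots,\lambda_n)$ into three classes exactly as in Lemma~\ref{rootbound}: those involving neither $\lambda_i$ nor $\lambda_j$ (unchanged), those involving exactly one of them (which pair up, with the sum in each pair preserved because $\lambda_i+\lambda_j$ is preserved), and those involving both, in which the factor $\lambda_i \lambda_j$ is replaced by
\[
(\lambda_i+1)(\lambda_j-1) = \lambda_i\lambda_j + (\lambda_j-\lambda_i-1) \geq \lambda_i\lambda_j + 1.
\]
Positivity of the remaining roots then ensures a strict increase for every $m \geq 2$.

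I would then iterate the exchange. The quantity $\sum_k \lambda_k^2$ strictly decreases with every exchange (it drops by $2(\lambda_j-\lambda_i-1) \geq 2$) and is a nonnegative integer, so the procedure halts after finitely many steps. At termination, no two roots differ by more than $1$, so the roots all lie in a pair $\{k,k+1\}$ of consecutive integers. If $a$ denotes the number of roots equal to $k+1$, the preserved sum gives $nk + a = n\bar{\lambda}$, and because $\bar{\lambda}$ is not an integer, $0 < a/n < 1$ forces $k = \lambda_*$ and $a = n-\delta$ uniquely. Thus the terminal configuration is always the multiset with $n-\delta$ copies of $\lambda^*$ and $\delta$ copies of $\lambda_*$, so $a_m$ is bounded above by $e_m$ evaluated there, with equality exactly when no exchange was ever available, i.e.\ when $p(\lambda) = (\lambda-\lambda^*)^{n-\delta}(\lambda-\lambda_*)^{\delta}$. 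The displayed formula for $a_2$ is then the direct expansion of $e_2$ on this multiset, splitting pairs into $\lambda^*$-$\lambda^*$, $\lambda^*$-$\lambda_*$, and $\lambda_*$-$\lambda_*$ contributions. I do not expect any serious obstacle; the only mildly subtle points are the termination argument (handled by the $\sum \lambda_k^2$ potential) and the uniqueness of the terminal multiset, which is precisely where the hypothesis that $\bar{\lambda}$ is non-integral is used.
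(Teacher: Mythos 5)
Your proposal is correct, and it follows the same overall strategy as the paper: a discrete smoothing argument that modifies the multiset of integer roots while preserving their sum and weakly increasing every $e_m$, terminating at the extremal configuration $(\lambda^*)^{n-\delta}(\lambda_*)^{\delta}$. The difference is in the exchange step. The paper's (very brief) proof picks a root strictly below $\lambda_*$, pairs it with a root strictly above $\lambda^*$, and replaces the pair by their average or the two integers straddling it, then says the argument of Lemma~\ref{rootbound} adapts. Your unit exchange $(\lambda_i,\lambda_j)\mapsto(\lambda_i+1,\lambda_j-1)$ whenever $\lambda_j\ge\lambda_i+2$ is a cleaner variant: the verification that $e_m$ strictly increases is immediate from $(\lambda_i+1)(\lambda_j-1)\ge\lambda_i\lambda_j+1$, the potential $\sum_k\lambda_k^2$ gives termination at once, and --- importantly --- the stopping condition ``no two roots differ by more than $1$'' manifestly covers \emph{every} non-terminal configuration, including ones the paper's step does not literally address (e.g.\ all roots at least $\lambda_*$ but some root exceeding $\lambda^*$, as with roots $2,2,4$). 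Your identification of the unique terminal multiset from the preserved sum and the non-integrality of $\bar\lambda$ is also exactly where that hypothesis should enter. In short, same route, but your write-up is more complete and self-contained than the paper's sketch.
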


\begin{proof}  
If a root $\lambda_1$ is strictly less than $\lambda_*,$ then
there is a root $\lambda_2$ such that $\lambda_2 > \lambda^*.$
If $\lambda_1 + \lambda_2$ is an even integer, then we can replace
$\lambda_1$ and $ \lambda_2$ by two roots, both equal to the average
$\frac {1}{2}(\lambda_1 + \lambda_2).$
If $\lambda_1 + \lambda_2$ is an odd integer, then we replace
$\lambda_1$ and $ \lambda_2$ by $\lfloor \frac {1}{2} (\lambda_1 + \lambda_2)\rfloor$ and
$\lceil \frac {1}{2} (\lambda_1 + \lambda_2) \rceil,$ the two integers
straddling the average.

We can now adapt the argument in the proof of Lemma 2.1, using a variation on the arithmetic-geometric mean inequality.  
\end{proof}

\section{Three-element circuits}

In this section we find lower bounds on the number of $3$-element circuits
in a matroid whose characteristic polynomial has only real or integer
roots.   Note that the size $3r-3$ that occurs in both lemmas
in this section is the maximum number of elements in a simple
cographic rank-$r$ matroid. Recall that a {\em line} in a matroid is a rank-2 flat.

\bigskip

\begin{lemma}\label{threecircuits}
Let $M$ be a simple matroid of rank $r$ with $3r-3$ elements.
Suppose that the lines in $M$ have at most three elements,
the characteristic roots of $M$ are real
and $\chi(M;2) = 0.$  Then $M$ has at least $3r-5$ $3$-element
circuits (or $3$-point lines). In addition, $M$ has exactly $3r-5$ $3$-element circuits if and only if
$$
\chi(M;\lambda) = (\lambda - 1) (\lambda - 2) (\lambda - 3)^{r-2}.
$$
\end{lemma}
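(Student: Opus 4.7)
The plan is to rephrase the bound on $3$-point lines in terms of the coefficient $a_2$ of $\lambda^{r-2}$ in $\chi(M;\lambda)$, then factor off $(\lambda-1)(\lambda-2)$ and apply Lemma~\ref{rootbound} to the resulting degree-$(r-2)$ polynomial.

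Step one is a translation. Let $L_t$ denote the number of $t$-point lines of $M$. A routine M\"obius computation on the interval $[\hat 0,\ell]$ in the lattice of flats gives $\mu(\hat 0,\ell) = |\ell|-1$, so the $\lambda^{r-2}$-coefficient is $a_2 = L_2 + 2L_3$. Counting pairs of elements through lines gives $L_2 + 3L_3 = \binom{3r-3}{2}$, and subtracting yields
\[
L_3 = \binom{3r-3}{2} - a_2.
\]
Hence it suffices to bound $a_2$ from above.

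Step two is to isolate two roots. Since $M$ has at least one element, $\chi(M;1)=0$, so $1$ is a root; and $2$ is a root by hypothesis. The well-known alternation of signs in the coefficients of $\chi(M;\lambda)$ forces any real root to be non-negative, and since $M$ is simple (no loops), $0$ is not a root. Thus the remaining $r-2$ real roots $\mu_1,\ldots,\mu_{r-2}$ are positive; using $a_1 = 3r-3$ (the number of atoms), they sum to $3(r-2)$ with mean $3$. Applying Lemma~\ref{rootbound} to the quotient polynomial $\chi(M;\lambda)/[(\lambda-1)(\lambda-2)]$ then gives $e_2(\mu_1,\ldots,\mu_{r-2}) \le 9\binom{r-2}{2}$, with equality if and only if every $\mu_i = 3$.

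Finally, expanding $a_2 = e_2(1,2,\mu_1,\ldots,\mu_{r-2}) = 2 + 3\cdot 3(r-2) + e_2(\mu_1,\ldots,\mu_{r-2})$ and substituting gives $a_2 \le 9r - 16 + 9\binom{r-2}{2}$; a short arithmetic check then yields $L_3 \ge 3r-5$, with equality precisely when $\chi(M;\lambda) = (\lambda-1)(\lambda-2)(\lambda-3)^{r-2}$. The one subtlety to watch is that the hypothesis $\chi(M;2)=0$ is genuinely needed: directly applying Lemma~\ref{rootbound2} to $\chi(M;\lambda)$ using only $\bar\lambda = 3 - 3/r$ yields $\delta=3$ and, after the analogous computation, only the weaker bound $L_3 \ge 3r-6$. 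Factoring off the known root $2$ before invoking Lemma~\ref{rootbound} is exactly what buys the extra unit.
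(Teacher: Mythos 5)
Your argument is correct and is essentially the paper's own proof: both express the number of $3$-point lines in terms of the $\lambda^{r-2}$-coefficient via $\mu(\hat 0,\ell)=|\ell|-1$ and the pair count $\binom{3r-3}{2}$, factor off $(\lambda-1)(\lambda-2)$, and apply Lemma~\ref{rootbound} to the degree-$(r-2)$ quotient with mean root $3$; the only difference is bookkeeping (you track $e_2$ of the roots, the paper tracks the coefficients $b_1,b_2$ of the quotient directly). Your explicit check that the remaining roots are positive (via sign-alternation of the Whitney numbers) is a detail the paper leaves implicit but which Lemma~\ref{rootbound} genuinely requires.
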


\begin{proof}
Write
\begin{equation*}
\chi(M;\lambda) = (\lambda-1)(\lambda-2)\chi^{\dagger}(M;\lambda),
\end{equation*}
where 
\begin{equation*}
\chi^{\dagger}(M;\lambda) = \lambda^{r-2} - b_1 \lambda^{r-3} + b_2 \lambda^{r-4} - \cdots + (-1)^{r-2}b_{r-2}.
\end{equation*}
Let $\gamma_i$ be the number of lines in $M$ with $i$ elements, so $\gamma_3$ is the number of 3-element circuits. By hypothesis, $\gamma_i = 0$ if $i \ge 4.$ From standard results on characteristic polynomials of matroids, the coefficients of $\lambda^{r-1}$ and $\lambda^{r-2}$ in $\chi(M;\lambda)$ are equal to the number of elements $e$ and $\binom{e}{2} -\gamma_3$ respectively. Thus we have
\begin{align*}
b_1 + 3 & = 3r-3,\\
b_2 + 3b_1 + 2 &= \binom{3r-3}{2}-\gamma_3.
\end{align*}
and so
\begin{align*}
\gamma_3 &= \binom{3r-3}{2} - 9(r-2) - 2 - b_2
\end{align*}

If all the roots of $\chi^\dagger(M;\lambda)$ are real then, because $b_1 = 3(r-2)$, we can apply Lemma~\ref{rootbound} to $\chi^\dagger(M;\lambda)$ with $ \bar{\lambda} = 3$ and conclude that
\begin{equation}\label{eqn2}
b_2 \leq 9 \binom{r-2}{2}.
\end{equation}
On substituting this inequality into the equation given above
for $\gamma_3$, we conclude that
$$
\gamma_3 \ge 3r - 5.
$$
Equality occurs if and only if the inequality~\eqref{eqn2} is an equality,
that is,
when $\chi^\dagger(\lambda) = (\lambda - 3)^{r-2}.$
\end{proof}

For matroids with fewer elements, we can get
an analogous lower bound on the number of $3$-element circuits, but at
the cost of the stronger assumption that
the characteristic polynomial has {\it integer} roots,
rather than real roots.

\begin{lemma}\label{threecircuits2}
Let $M$ be a
simple rank-$r$ matroid with $3r-3-\delta$ elements,
where $0 \le \delta \le r-2.$
Suppose that the lines in $M$ have at most three elements,
the characteristic roots of $M$ are
integers, and $\chi(M;2) = 0.$  Then $M$ has
at least $ 3r-5- 2\delta$ $3$-element circuits (or $3$-point lines).
In addition, $M$ has exactly $3r-5- 2\delta$ $3$-element circuits if and only if
$$
\chi(M;\lambda) =
(\lambda - 1) (\lambda - 2)^{\delta + 1} (\lambda - 3)^{r-2-\delta}.
$$
\end{lemma}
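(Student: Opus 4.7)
The plan is to imitate the proof of Lemma \ref{threecircuits} line for line, replacing the real-roots tool Lemma \ref{rootbound} by its integer-roots sharpening Lemma \ref{rootbound2}. I begin by factoring
$$
\chi(M;\lambda) = (\lambda-1)(\lambda-2)\chi^{\dagger}(M;\lambda),
$$
which is legitimate because $\lambda=1$ is automatically a characteristic root of any simple matroid of positive rank and $\lambda=2$ is one by hypothesis; the remaining $r-2$ integer roots are positive, since a loopless matroid has no non-positive real characteristic roots. Write $\chi^{\dagger}(M;\lambda) = \lambda^{r-2} - b_1\lambda^{r-3} + b_2\lambda^{r-4} - \cdots$ and match the top two coefficients of $(\lambda^2-3\lambda+2)\chi^{\dagger}(M;\lambda)$ against the standard values $e = 3r-3-\delta$ and $\binom{e}{2}-\gamma_3$ (the latter using the hypothesis that every line has at most three points). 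This gives
$$
b_1 = 3(r-2) - \delta \qquad \text{and} \qquad \gamma_3 = \binom{3r-3-\delta}{2} - 3b_1 - 2 - b_2.
$$

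Next I apply Lemma \ref{rootbound2} to $\chi^{\dagger}$. Its average root is $\bar{\lambda} = b_1/(r-2) = 3 - \delta/(r-2)$; in the generic range $0 < \delta < r-2$ this is non-integral with $\lambda^{*} = 3$, $\lambda_{*} = 2$, and $(r-2)\bar{\lambda} = 3(r-2-\delta) + 2\delta$, so the integer parameter appearing in Lemma \ref{rootbound2} coincides with our $\delta$. Taking $m = 2$ yields
$$
b_2 \le 9\binom{r-2-\delta}{2} + 6(r-2-\delta)\delta + 4\binom{\delta}{2}.
$$
Substituting this into the formula for $\gamma_3$ above and simplifying the resulting binomial expression collapses everything down to $\gamma_3 \ge 3r-5-2\delta$. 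The two edge cases where $\bar{\lambda}$ is an integer are handled by Lemma \ref{rootbound} instead: $\delta = 0$ is exactly Lemma \ref{threecircuits}, while $\delta = r-2$ gives $\bar{\lambda} = 2$ and Lemma \ref{rootbound} then forces $\chi^{\dagger}(M;\lambda) = (\lambda-2)^{r-2}$ in the equality case.

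For the equality characterization, the bound on $b_2$ above is sharp precisely when the equality clause of Lemma \ref{rootbound2} holds, namely when $\chi^{\dagger}(M;\lambda) = (\lambda-3)^{r-2-\delta}(\lambda-2)^{\delta}$; restoring the factor $(\lambda-1)(\lambda-2)$ yields $\chi(M;\lambda) = (\lambda-1)(\lambda-2)^{\delta+1}(\lambda-3)^{r-2-\delta}$, as claimed. No new conceptual ingredient is needed beyond Lemma \ref{rootbound2}; the only real obstacle is the routine bookkeeping to verify that the three-term symmetric-function upper bound on $b_2$, together with the $3b_1 + 2$ contribution, simplifies against $\binom{3r-3-\delta}{2}$ cleanly to $3r-5-2\delta$.
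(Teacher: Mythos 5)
Your proposal is correct and follows essentially the same route as the paper: factor out $(\lambda-1)(\lambda-2)$, compute $\gamma_3$ from the top coefficients, and bound $b_2$ via Lemma~\ref{rootbound2} with $\lambda^*=3$, $\lambda_*=2$. Your explicit separate treatment of the boundary cases $\delta=0$ and $\delta=r-2$ (where $\bar\lambda$ is an integer and Lemma~\ref{rootbound2} as stated does not apply, so Lemma~\ref{rootbound} must be used instead) is a small but genuine point of care that the paper's own proof glosses over.
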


\begin{proof}
Write
$$
\chi(M;\lambda) = (\lambda-1)(\lambda-2) \chi^{\dagger}(M;\lambda),
$$ and let $1$, $-b_1$, $b_2$ denote the leading
coefficients of $\chi^\dagger(M;\lambda)$ (as in the proof of the previous lemma).
Then we have
\begin{align*}
b_1 + 3 & = 3r-3-\delta,\\
b_2 + 3b_1 + 2 &= \binom{3r-3-\delta}{2}-\gamma_3,
\end{align*}
and so
\begin{equation}\label{gamma3}
\gamma_3  = \binom{3r-3-\delta}{2} - b_2 - 3(3r-6-\delta) - 2.
\end{equation}

If all the roots of $\chi(M;\lambda)$ are integers, then we can apply Lemma~\ref{rootbound2} to $\chi^{\dagger}(M;\lambda)$, where 
\begin{equation*}
(r-2) \bar{\lambda}=3 (r-2) - \delta = 3(r-2-\delta) + 2\delta
\end{equation*}
and so $\lambda_* = 2$ and $\lambda^* = 3$.  This yields the inequality
\begin{equation}\label{b2bound}
b_2 \leq 9 \binom{r-2-\delta}{2} + 6 \delta(r-2-\delta) + 4 \binom{\delta}{2},
\end{equation}
and substituting this into \eqref{gamma3} and canceling terms we obtain
$$
\gamma_3 \ge 3r - 5 - 2 \delta.
$$
Equality occurs if and only if the inequality \eqref{b2bound} is an equality, that is,
when $\chi^{\dagger}(M;\lambda)
= (\lambda - 3)^{r-2-\delta}(\lambda-2)^{\delta}.$
\end{proof}

The proof of Lemma \ref{threecircuits} can be used to prove the following
general result.

\vskip .33in \noindent
\begin{lemma}\label{threecircuits3}
 (a) Let $c \ge 2$ and $M$ be a
rank-$r$ connected simple matroid with $c(r-2) + 3$ elements with real
characteristic roots such that $\chi(M;2) = 0.$  Then
$$
\sum_{i:\, i \ge 3} \binom {i-1}{2} \gamma_i \ge \frac {c(c-1)}{2} (r-2) + 1.
$$
In particular, if all the lines in $M$ have at most $3$ points,
then $M$ has at least $\binom {c}{2}(r-2) + 1$
$3$-element circuits.  Equality occurs if and only if
$$
\chi(M;\lambda) = (\lambda - 1)(\lambda - 2)(\lambda - c)^{r-2}.
$$

(b) Let $M$ be a
rank-$r$ connected simple matroid with $c(r-1) + 1$ elements and real
characteristic roots.  Then
$$
\sum_{i:\, i \ge 3} \binom {i-1}{2} \gamma_i \ge \frac {c(c-1)}{2}(r-1).
$$
Equality occurs if and only if
$$
\chi(M;\lambda) = (\lambda - 1)(\lambda - c)^{r-1}.
$$
\end{lemma}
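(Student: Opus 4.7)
The plan is to adapt the proof of Lemma~\ref{threecircuits} almost verbatim, with only numerical parameters changing. The key identity, implicit in the earlier proofs, is that for any simple rank-$r$ matroid with $e$ elements,
\begin{equation*}
\chi(M;\lambda) = \lambda^r - e\lambda^{r-1} + \Bigl(\tbinom{e}{2} - S\Bigr)\lambda^{r-2} - \cdots,
\end{equation*}
where $S := \sum_{i \ge 3}\binom{i-1}{2}\gamma_i$; this follows from $\mu(\emptyset,L) = i_L - 1$ for each line $L$ with $i_L$ points, combined with $\sum_L \binom{i_L}{2} = \binom{e}{2}$ (each pair of elements lies on a unique line) and the identity $\binom{i}{2} - (i-1) = \binom{i-1}{2}$.

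For part (a), factor $\chi(M;\lambda) = (\lambda-1)(\lambda-2)\chi^{\dagger}(M;\lambda)$ (using the hypothesis $\chi(M;2)=0$ together with the standard fact that $\chi(M;1)=0$ for any loopless matroid of positive rank), and write $\chi^{\dagger}(M;\lambda) = \lambda^{r-2} - b_1\lambda^{r-3} + b_2\lambda^{r-4} - \cdots$. Comparing the coefficients of $\lambda^{r-1}$ and $\lambda^{r-2}$ in the product gives $b_1 = c(r-2)$ and
\begin{equation*}
S = \binom{c(r-2)+3}{2} - 3c(r-2) - 2 - b_2.
\end{equation*}
The average root of $\chi^{\dagger}$ is $b_1/(r-2) = c$, so Lemma~\ref{rootbound} yields $b_2 \le \binom{r-2}{2}c^2$. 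Substituting and simplifying produces $S \ge \frac{c(c-1)}{2}(r-2) + 1$, with equality precisely when $\chi^{\dagger}(M;\lambda) = (\lambda - c)^{r-2}$; the specialization to $\gamma_3 \ge \binom{c}{2}(r-2)+1$ in the case that every line has at most three points is immediate, since then $S = \gamma_3$.

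Part (b) is identical in spirit, except that the hypothesis $\chi(M;2)=0$ is dropped and only one linear factor is extracted: write $\chi(M;\lambda) = (\lambda-1)\chi^{\ddagger}(M;\lambda)$, expand $\chi^{\ddagger}(M;\lambda) = \lambda^{r-1} - b_1\lambda^{r-2} + b_2\lambda^{r-3} - \cdots$ analogously, compute $b_1 = c(r-1)$ so that the average root is again $c$, invoke Lemma~\ref{rootbound} to obtain $b_2 \le \binom{r-1}{2}c^2$, and simplify to reach $S \ge \frac{c(c-1)}{2}(r-1)$, with the equality case $\chi^{\ddagger}(M;\lambda) = (\lambda - c)^{r-1}$.

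I do not anticipate any real obstacle. The sizes $c(r-2)+3$ in (a) and $c(r-1)+1$ in (b) are engineered so that, after stripping the forced linear factors, the average of the remaining roots is exactly $c$; this is precisely the regime in which the arithmetic--geometric bound of Lemma~\ref{rootbound} is tight at the claimed extremal polynomial. The only routine verification is the brief arithmetic that turns the upper bound on $b_2$ into the stated lower bound on $S$.
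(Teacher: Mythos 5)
Your proposal is correct and follows exactly the route the paper intends: the paper gives no separate proof of this lemma, saying only that the proof of Lemma~\ref{threecircuits} adapts, and your argument is precisely that adaptation, including the correct generalized identity $a_2 = \binom{e}{2} - \sum_{i\ge 3}\binom{i-1}{2}\gamma_i$ for the second coefficient and the verification that the element counts make the average root of the reduced polynomial equal to $c$. The arithmetic in both parts checks out.
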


\section{Graphs with integral flow roots}

In this section we apply the results of the previous section to
flow polynomials.
Let $G$ be a $3$-edge-connected graph with vertex set $V$ and edge set $E$,
$v_i$ be the number of vertices of degree $i,$ and $r$ be the rank of its
cocycle matroid $M^{\perp}(G).$  Then
\begin{eqnarray*}
|V|  =  \sum_{i:\,i\ge 3} v_i, \qquad
|E|  =  \sum_{i:\,i\ge 3} \frac {iv_i}{2},
\end{eqnarray*}
and
$$
r   =  |E| - |V| + 1 =
\left[ \sum_{i:\, i\ge 3} \frac {(i-2)v_i}{2} \right] + 1.
$$
Let $\delta$ be defined by
\begin{equation}\label{delta}
\delta = \sum_{i:\,i \ge 3} (i-3) v_i.
\end{equation}
Then
$$
|V| = 2r - 2 - \delta, \quad\mathrm{and}\quad |E| = 3r - 3 - \delta.
$$

We shall prove Theorem~\ref{graphmain} in the following form.

\begin{theorem} \label{main}
Let $M$ be a simple rank-$r$ cographic matroid.  Suppose that
the characteristic polynomial $\chi(M;\lambda)$ has only integral  roots.
Then there exists a planar chordal graph $H$ with dual $G$ such that
$$
M = M^\perp(G) = M(H).
$$
\end{theorem}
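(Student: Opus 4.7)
The plan is to induct on the rank $r$ of $M$. In the base case, $r$ is small and one checks directly that the only simple cographic matroids of small rank with integral characteristic roots are cycle matroids of planar chordal graphs (for instance $M(K_3)$ and $M(K_4)$). For the inductive step, the reductions of Section~2 (direct sums across cut-vertices and simplification of parallel classes) let us assume $M = M^\perp(G)$ with $G$ simple and $3$-edge-connected; in particular every line of $M$ is a $3$-point line, so the hypothesis on line sizes needed by the lemmas of Section~4 is free.

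First I would arrange that $\chi(M;2)=0$. Since $F(G;2)=0$ precisely when $G$ has an odd-degree vertex, and a $3$-edge-connected $G$ has minimum degree at least $3$, this holds whenever $G$ has any vertex of degree exactly $3$. The residual case, where $G$ is Eulerian with all degrees $\ge 4$, forces $\delta \ge r-1$; I would handle it separately, either by directly ruling out such a $G$ with all flow roots integral via Lemma~\ref{threecircuits3}(b) applied to $M^\perp(G)$ together with the resulting line count, or by peeling off the $(\lambda-1)(\lambda-3)^{r-1}$ extremal case as a small exception.

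Once $\chi(M;2)=0$ and $\delta \le r-2$, Lemma~\ref{threecircuits2} supplies at least $3r-5-2\delta \ge 1$ three-element circuits of $M$; each corresponds to a $3$-edge-cut of $G$. I would then extract a \emph{proper} $3$-edge-cut $L$, using that an improper one consists of the three edges at a degree-$3$ vertex; if every guaranteed circuit were improper, the extremal equality case of Lemma~\ref{threecircuits2} would force $\chi^{\dagger}(M;\lambda)=(\lambda-3)^{r-2-\delta}(\lambda-2)^\delta$ and the graph to have exactly these $3r-5-2\delta$ degree-$3$ vertices, a configuration that can be ruled out by direct inspection. Applying Lemma~\ref{flowcut} to $L$ gives
\begin{equation*}
F(G;\lambda)(\lambda-1)(\lambda-2) = F(G_1;\lambda) F(G_2;\lambda),
\end{equation*}
with $G_1, G_2$ bridgeless and with strictly fewer edges than $G$. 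Since $F(G_1), F(G_2) \in \mathbb{Z}[\lambda]$ are monic and their product has only integer roots, unique factorization in $\mathbb{Q}[\lambda]$ forces each factor to split into integer linear factors. The inductive hypothesis then yields planar chordal graphs $H_1$ and $H_2$ dual to $G_1$ and $G_2$.

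Finally, reassembling $G$ from $G_1$ and $G_2$ along $L$ dualizes to a clique-sum of $H_1$ and $H_2$ at the triangle corresponding to $L$, and clique-sums along a triangle preserve both planarity and chordality, so $G$ is the dual of a planar chordal graph and the induction closes. The principal obstacles will be the Eulerian (all-even-degree) case, where $\chi(M;2)\ne 0$ a priori and the machinery of Section~4 does not directly apply, and the verification that at least one of the three-element circuits produced by Lemma~\ref{threecircuits2} is proper; both require converting integrality information into graph-theoretic structure rather than just using connectivity.
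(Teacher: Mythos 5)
Your overall strategy matches the paper's: induct on rank, reduce to a connected simple cographic $M=M^\perp(G)$ with $G$ $3$-edge-connected, count $3$-circuits via Lemma~\ref{threecircuits2}, extract a proper minimal $3$-cutset, split with Lemma~\ref{flowcut}, and reassemble by a clique-sum at a triangle. The genuine gap is at the step where you dispose of the possibility that every guaranteed $3$-circuit is improper: you assert that the resulting configuration (equality in Lemma~\ref{threecircuits2}, with exactly $3r-5-2\delta$ vertices of degree $3$) ``can be ruled out by direct inspection.'' It cannot. When $\delta=r-2$ this configuration is realized by the planar duals of $2$-trees: graphs with $r$ vertices, $2r-1$ edges, $r-1$ vertices of degree $3$ and one of degree $r+1$, and flow polynomial $(\lambda-1)(\lambda-2)^{r-1}$. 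These have exactly $r-1$ minimal $3$-cutsets, all improper, so no proper cut exists, yet they are legitimate members of the target class; an induction that insists on finding a proper $3$-cutset fails on precisely these graphs. The paper closes this case not by excluding it but by a separate structural argument (Lemma~\ref{2tree}) showing any such graph \emph{is} the dual of a $2$-tree, hence of a planar chordal graph. You need an analogue of that positive identification, not a non-existence claim. (For $\delta\le r-3$ the comparison of $3r-5-2\delta$ with the number $v_3$ of degree-$3$ vertices does yield a proper cutset, essentially as you intend.)

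A secondary, fixable issue is your handling of $\chi(M;2)=0$ and the Eulerian case. Once $M$ is connected, $(\lambda-1)^2$ does not divide $\chi(M;\lambda)$, so exactly one root equals $1$ and the other $r-1$ integer roots are at least $2$; since the sum of the roots equals the number of elements $|E|=3r-3-\delta$, this forces $\delta\le r-2$ (so your residual case with all degrees at least $4$, which requires $\delta\ge r-1$, is vacuous) and simultaneously forces $2$ to be a root, since otherwise the root sum would be at least $3r-2>3r-3\ge|E|$. You never invoke the connectivity fact about the multiplicity of the root $1$, and without it the Eulerian case is not actually eliminated by the tools you cite; the detour through Lemma~\ref{threecircuits3}(b) is unnecessary.
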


\begin{proof}
We prove the result by induction on the rank $r.$ The cographic matroids of
rank $3$ or less are planar graphic, and it is easy to check that the theorem
holds in this case.  Thus, we may assume that $r \ge 4.$

If $M$ is not connected and equals $M^{\prime} \oplus M^{\prime\prime},$
then $\chi(M;\lambda) = \chi(M^{\prime};\lambda) \chi(M^{\prime\prime};\lambda).$
Hence, $\chi(M^{\prime};\lambda)$ and $\chi(M^{\prime\prime};\lambda)$
have integer roots and we can apply induction.   We can now suppose
that $M$ is simple and connected, and thus, we can find a $2$-vertex-connected, $3$-edge-connected
graph $G$ with vertex set $V$ and edge set $E$ such that $M = M^{\perp}(G).$

Let $\delta$ be defined as in Equation~\eqref {delta}.
Using the result that that $(\lambda-1)^2$ divides $\chi(M;\lambda)$ if and only if
$M$ is not connected
(see, for example, \cite{MR0215744}), we may assume that $\chi(M;\lambda)$
has exactly one root equal to $1$ and all other roots integers greater than or equal
to $2.$  Thus, $|E| \ge 2r-1,$ that is,
$$
\delta \le r-2.
$$

We distinguish two cases: $\delta \le r-3$ and $\delta = r-2.$
Suppose first that $\delta \le r-3.$  By Lemma~\ref{threecircuits2} the
matroid $M$ has at least
$3r - 5 - 2 \delta$ $3$-circuits.
A circuit in the cographic matroid $M$ corresponds
to a minimal cutset in the graph $G,$ and so $G$ has at least
$3r-5-2\delta$ minimal $3$-cutsets.
Of these, there are $v_3$ cutsets
separating a vertex of degree $3$ from the subgraph on the other vertices.
Since
$$
v_3 = 2r - 2 - \delta - \sum_{i: \, i \ge 4} v_i,
$$
there are at least
$$
3r - 5 - 2 \delta - \left( 2r - 2 - \delta -
\sum_{i:\,i \ge 4} v_i\right) = r - 3 - \delta + \sum_{i:\,i \ge 4} v_i
$$
proper minimal $3$-cutsets.
Since $\delta > 0$ if and only if there is at least one vertex of degree
greater than $3,$ we conclude that $G$ has least one proper minimal $3$-cutset $L.$
Let $G_1$ and $G_2$ be the two graphs obtained from $G$ and $L$ as defined in
Lemma~\ref{flowcut} (and illustrated in Figure~\ref{fig-3cutset}).
By Lemma~\ref{flowcut}, the flow polynomials of $G_1$ and $G_2$ have only integral roots
and by induction, both $G_1$ and $G_2$ are the duals of planar chordal graphs.
The graph $G$ is obtained from $G_1$ and $G_2$ by
identifying the three edges incident with a vertex of $G_1$ with the three
edges incident with a vertex of $G_2$.
In the planar dual, this
corresponds to forming $G^\perp$ by identifying a triangular face of
$G_1^\perp$ with a triangular face of $G_2^\perp$. Identifying a face in
each of two planar graphs gives a planar graph, and identifying a clique in
each of two chordal graphs yields a chordal graph.  Hence, $G$
is the dual of a planar chordal graph.

To finish the proof,
we consider the case when $\delta = r-2.$ In this case,
$\chi(M;\lambda)  = (\lambda - 1)(\lambda - 2)^{r-1},$
the graph $G$ has $2r-1$ edges,
$r$ vertices, and by Lemma~\ref{threecircuits2}, has
at least $r-1$ minimal $3$-cutsets.  If $G$ has $r$ or more minimal $3$-cutsets,
then as in the first case, $G$ has a proper minimal $3$-cutset and we can apply
induction.  Thus, we may assume that $G$ has exactly $r-1$ minimal $3$-cutsets, none of which is proper.
It follows that $r-1$ vertices in $G$ have degree $3,$ and so if $d$ is the degree of the last vertex then
$$
2|E| = 2(2r-1) = 3(r-1) + d,
$$
and so $d = r+1$. By Lemma~\ref{2tree}, $G$ is the dual of a $2$-tree and
hence dual to a planar chordal graph.
\end{proof}

\begin{lemma}\label{2tree}
A $2$-vertex-connected graph $G$ with $r$ vertices of which $r-1$ have
degree $3$ and one has degree $r+1$ is the planar dual of a $2$-tree.
\end{lemma}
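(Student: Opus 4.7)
My plan is to induct on $r$. For the base case $r=2$, the graph $G$ must be the three-edge multigraph on two vertices, which is the planar dual of $K_3$.

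For the inductive step with $r \geq 3$, let $u$ be the unique vertex of degree $r+1$ and set $H := G - u$. A degree-sum count gives $|E(G)| = 2r-1$ and hence $|E(H)| = r-2$; together with the 2-vertex-connectivity of $G$ (which forces $H$ to be connected), this means $H$ is a tree on $r-1$ vertices. For each degree-3 vertex $v$, the multiplicity $d_v$ of the edge $uv$ in $G$ equals $3 - \deg_H(v)$, and $d_v \leq 2$ because $d_v = 3$ would leave $v$ isolated in $H$. Taking $v$ to be any leaf of $H$ yields $d_v = 2$: the vertex $v$ is joined to $u$ by two parallel edges and to its unique $H$-neighbour $w$ by a single edge.

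I would then form a reduced graph $G'$ by deleting $v$ (together with its three incident edges) and inserting a single new edge $uw$; equivalently, contract one of the parallel $uv$ edges in $G$ and delete the resulting loop. Then $G'$ has $r-1$ vertices, with $u$ now of degree $r$ and all other vertices still of degree $3$. The main technical step is verifying that $G'$ remains 2-vertex-connected. The only possible new cut-vertex after contracting $uv$ is the merged vertex $u$, and it would be a cut-vertex of $G'$ exactly when $\{u,v\}$ is a 2-vertex cut of $G$; but $G - \{u,v\} = H - v$ is connected because $v$ is a leaf of the tree $H$, so no such cut exists. This leaf property is genuinely needed here, since contraction in a 2-connected but not necessarily 3-connected multigraph can in principle create cut-vertices.

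By the inductive hypothesis, $G' = (T')^{\perp}$ for some 2-tree $T'$ on $r$ vertices. Let $xy$ be the edge of $T'$ dual to the newly inserted edge $uw$ of $G'$; since $u$ corresponds to the outer face of $T'$, the edge $xy$ lies on that outer face. Attaching a new vertex $z$ adjacent to both $x$ and $y$ on the outer side of $xy$ produces a 2-tree $T$ on $r+1$ vertices, and tracing the planar duality confirms $T^{\perp} = G$: the new triangular face $xyz$ of $T$ is dual to the reinstated vertex $v$ of $G$, and attaching a triangle outside $xy$ exactly undoes the earlier deletion of $v$. Apart from the 2-connectivity check flagged above, everything else reduces to routine planar-duality bookkeeping.
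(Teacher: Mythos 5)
Your proposal is correct and follows essentially the same route as the paper: induct on $r$, locate a degree-$3$ vertex joined to the high-degree vertex by a double edge, contract/identify to reduce to $r-1$ vertices, and extend the dual $2$-tree by one vertex. The only differences are cosmetic — you find the double edge via the tree structure of $G-u$ rather than by pigeonhole, and you spell out the $2$-connectivity check that the paper merely asserts.
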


\begin{proof}
We prove this by induction on $r$. When $r=2,$ the graph is a triple-edge
which is the planar dual of $K_3$.  So suppose that $r>2$ and denote the vertex of degree
$r+1$ by $v.$   As $G$ is $2$-vertex-connected, $v$ is not connected by a triple-edge
to any other vertex, but as $r+1 > r-1$, it must be joined by a double-edge to
some vertex $u$ where $u$ has a single further neighbor that we denote $w$.
The graph obtained by deleting the double-edge and identifying $u$ and $v$
is $2$-vertex-connected and has $r-1$ vertices, of which $r-2$ have degree $3$
and one has degree $r$, and hence by induction it is the planar dual of
a $2$-tree $T$. It is straightforward to see that adding a new vertex in the
face of size $r$ adjacent to the edge $\{v,w\}$ of $T$
(under the convention that edges of a graph are identified with those of
its planar dual)
yields a $2$-tree whose dual is $G$.
\end{proof}

Lemma~\ref{2tree} shows that when $\delta = r-2,$ the matroid $M$ 
is the cycle matroid of a maximal series-parallel graph.  
It might be useful to give an alternative (but equivalent) argument more congenial to matroid theorists.
As in the proof of the lemma, one shows that there is a vertex $u$ in $G$
of degree $3$ incident on a double-edge, which we label $a$ and $b,$
and a single edge, which we label $c.$
Then $\{a,b\}$ is a cocircuit of size $2$ and $\{a,b,c\}$ is a $3$-point line.
Let $X$ be the copoint complementary to the cocircuit $\{a,b\}.$  Then
the matroid $M$ is the parallel connection of the restriction $M|X$ and the line
$\{a,b,c\}$ at the point $c.$  By induction, $M$ is a parallel connection
of $r-1$ $3$-point lines, that is, the cycle matroid of a maximal series-parallel graph.

Observe that if we assume that $M$ has $3r - 3$ elements or, equivalently,
the graph $G$ is a cubic graph, then we can use Lemma~\ref{rootbound}
instead of Lemma~\ref{rootbound2} to obtain the following result.

\begin{theorem} \label{main-real}
If a $3$-connected cubic graph
$G$ has real flow roots, then $G$ is the dual of a chordal planar triangulation.
\end{theorem}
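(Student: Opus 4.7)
The plan is to mimic the proof of Theorem~\ref{main}, replacing Lemma~\ref{threecircuits2} by Lemma~\ref{threecircuits}. The key observation is that a cubic graph has $|V| = 2r-2$ and $|E| = 3r - 3$ (that is, $\delta = 0$ in the notation of Section~4), so the cographic matroid $M^{\perp}(G)$ has exactly $3r - 3$ elements, matching the size hypothesis of Lemma~\ref{threecircuits}. Since $G$ is 3-edge-connected and 2-connected, $M^{\perp}(G)$ is simple and connected; and since every vertex of $G$ has odd degree, $G$ is non-Eulerian, giving $F(G;2) = \chi(M^{\perp}(G);2) = 0$. As a small preliminary I would verify that every line of $M^{\perp}(G)$ has at most three elements, using the rank formula $r_{M^{\perp}}(X) = |X| + 1 - c(G - X)$: a hypothetical four-element rank-two flat would force each of the four edges to merge the same three components of $G - X$ in a pattern that is incompatible with $G$ being simple.

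I would then induct on the rank $r$. The base case $r = 3$ is $K_4$, which is its own planar dual and is a chordal triangulation. For $r \ge 4$, Lemma~\ref{threecircuits} guarantees at least $3r - 5$ minimal 3-cutsets in $G$. Exactly $v_3 = 2r - 2$ of these are improper (the vertex stars), leaving at least $(3r-5)-(2r-2) = r - 3 \ge 1$ proper minimal 3-cutsets. I would pick one such cutset $L$ and form the smaller graphs $G_1, G_2$ of Lemma~\ref{flowcut}. The product identity
\begin{equation*}
F(G_1;\lambda)\, F(G_2;\lambda) = (\lambda-1)(\lambda-2)\, F(G;\lambda)
\end{equation*}
forces $F(G_1;\lambda)$ and $F(G_2;\lambda)$ to factor into real linear factors, so each $G_i$ has only real flow roots. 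Moreover each $G_i$ is 3-connected cubic (the contracted vertex has degree $3$, every other vertex retains degree $3$, 3-edge-connectedness is preserved by contracting one side of a proper 3-cutset, and for cubic graphs 3-edge-connectedness is equivalent to 3-vertex-connectedness) and has strictly smaller rank, so the inductive hypothesis yields that $G_1$ and $G_2$ are each duals of chordal planar triangulations.

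To conclude, $G$ is reassembled from $G_1$ and $G_2$ by the same procedure used in the proof of Theorem~\ref{main}: identify a triangular face in each of the two planar duals. The result is a chordal planar graph (identifying a clique in each of two chordal graphs yields a chordal graph), and since $G$ is cubic every face of this graph is a triangle, so it is a chordal planar triangulation whose dual is $G$. The main obstacle is not any one clever step but the bookkeeping: verifying carefully that cubicity, 3-connectedness, and real-rootedness of the flow polynomial all descend from $G$ to $G_1$ and $G_2$, together with the elementary but nonroutine check that every line of $M^{\perp}(G)$ has at most three elements.
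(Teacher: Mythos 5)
Your proposal is correct and follows essentially the same route as the paper: the authors obtain Theorem~\ref{main-real} precisely by rerunning the induction of Theorem~\ref{main} with $\delta=0$, substituting Lemma~\ref{threecircuits} (which needs only real roots, via Lemma~\ref{rootbound}) for Lemma~\ref{threecircuits2}. Your additional checks (that $F(G;2)=0$ since $G$ is non-Eulerian, that lines of the cographic matroid have at most three points, and that cubicity and $3$-connectivity descend to $G_1$ and $G_2$) are exactly the bookkeeping the paper leaves implicit, and they all go through.
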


We do not know whether Theorem~\ref{main} holds if the hypothesis is weakened
so that we only assume that all the roots of $\chi(M;\lambda)$ are real.
Using Lemma \ref{threecircuits3}(a), we can show that if
$M$ is a rank-$r$ cographic matroid with real characteristic roots and
$\delta < \sqrt{2(r-2)},$
then $M$ is the cycle matroid of a planar chordal graph.

\section{Supersolvable Matroids}
\label{supersolvable}

In the remainder of this paper, we shall describe the matroid-theoretic aspects
of Theorem~\ref{main}.   We shall only consider matroids with no rank-$0$ elements.

Recall that a flat $X$ in a matroid $M$ is {\sl modular} if for every line $L$
in $M$ such that $\mathrm{rank}(X \vee L) = \mathrm{rank}(X) + 1,$
$X \cap L$ is non-empty (and hence a point or a rank-$1$ flat).
If $X$ is a modular flat, then the characteristic polynomial
$\chi(M|X;\lambda)$ of the restriction of $M$ to $X$ divides the
characteristic polynomial of $\chi (M;\lambda)$  (see Stanley \cite{MR0295976}).
A rank-$r$ matroid $M$ is {\em supersolvable} if
there exists a maximal chain of modular flats $X_0, X_1, X_2, \ldots, X_r,$
with $X_{i-1} \subset X_i$ and $\mathrm{rank}(X_i) = i.$  A maximal chain
of modular flats forces a complete factorization of
$\chi(M;\lambda)$ over the integers.  Explicitly, the
characteristic roots are $|X_i| - |X_{i-1}|,$ $i = 1,2,\ldots,r.$

The next lemma describe how a modular copoint forces a complete subgraph.   The first part of the lemma holds for arbitrary matroids (Lemma 5.14 in \cite{MR1411690}).

\begin{lemma} \label{modular}
Let $X$ be a modular copoint with complement $D$ in a simple binary
matroid $M.$  If $\mathrm{rank}(D) = d,$ then $M$ contains
an $M(K_{d+1})$-submatroid.
In particular, if $M$ has no $M(K_5)$-submatroid, then
$\mathrm{rank}(D) \le 3,$ and $M$ is the parallel connection of the
restriction $M|X$ and a point (at the empty set), a $3$-point line (at a point),
an $M(K_4)$ (at a $3$-point line), or a Fano plane $F_7$ (at a $3$-point line).
\end{lemma}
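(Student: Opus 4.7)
My plan is to realize $M$ as a set of nonzero vectors in $GF(2)^r$ and choose coordinates so that the modular copoint $X$ equals $M \cap H$ for the linear hyperplane $H = \{v : v_r = 0\}$. Then $D = M \setminus H$ sits in the affine complement, and the crucial consequence of modularity is that for any distinct $a, b \in D$, the line $\mathrm{cl}_M(\{a, b\})$ is not contained in $X$, so by modularity meets $X$; since $M$ is simple binary, this line must be exactly the 3-point line $\{a, b, a+b\}$ with $a+b \in M \cap X$. Thus pairwise sums of elements of $D$ automatically lie in $M$.

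For the first part, I would choose a basis $e_0, \ldots, e_{d-1}$ of $D$ (as a subset of $M$); these vectors are linearly independent in $GF(2)^r$ and span $\bar V := \mathrm{cl}_M(D)$. The set $S = \{e_i : 0 \le i < d\} \cup \{e_i + e_j : 0 \le i < j < d\} \subseteq M$ has cardinality $\binom{d+1}{2}$, has rank $d$, and in the basis $e_0, \ldots, e_{d-1}$ of $\bar V$ realizes precisely the canonical $GF(2)$-representation of $M(K_{d+1})$: place vertex $\infty$ at the origin and vertex $i$ at $e_i$, so that each edge $uv$ is sent to $e_u + e_v$ (with $e_\infty = 0$). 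Hence $M|S \cong M(K_{d+1})$.

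For the second part, combining the first part with the exclusion of $M(K_5)$ forces $d \le 3$. A count of vectors in the $2^{d-1}$-element affine set $\bar V \setminus H$ then pins down $M|\bar V$ in each case: $d = 1$ gives $|D| = 1$ and $\bar V$ is a single point; $d = 2$ gives $|D| = 2$ and (since modularity forces the single sum into $M$) $\bar V$ is a 3-point line meeting $X$ in a single point; $d = 3$ gives $|D| \in \{3, 4\}$, with all three pairwise sums of a basis of $D$ forced into $M \cap X$ by modularity, so that $\bar V \cap X$ is a 3-point line and $M|\bar V$ has either $6$ or $7$ elements --- namely $M(K_4)$ or $F_7$ respectively.

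The main remaining obstacle is to identify $M$ as the generalized parallel connection of $M|X$ and $M|\bar V$ glued along $\bar V \cap X$, rather than some richer extension of the two pieces. For this I would appeal to Brylawski's decomposition theorem (\cite{MR0357163}; see also \cite{MR1165543}, p.~205), which asserts that a modular flat of a matroid induces exactly such a parallel-connection factorization. Here $\bar V \cap X$ is a hyperplane of $M|\bar V$, and each of the four candidates (a single point, a 3-point line, $M(K_4)$, or $F_7$) is supersolvable with all of its hyperplanes modular, so Brylawski's theorem applies and yields $M$ in the stated form.
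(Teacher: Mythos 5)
Your argument is correct and follows essentially the same route as the paper: take a maximal independent subset of $D$, use modularity of the copoint $X$ to force each pairwise sum into $X$, and recognize the resulting configuration as the standard binary representation of $M(K_{d+1})$. Your treatment of the second part is more detailed than the paper's (which simply lists the four possibilities for $D$), and your explicit $GF(2)$ count of $\mathrm{cl}(D)\setminus X$ together with the appeal to Brylawski's decomposition across a modular flat correctly fills in the parallel-connection claim that the paper leaves implicit.
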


\begin{proof}
Let $I$ be an independent set of size $d$ in $D.$ Since $X$ is modular,
every line outside $X$ meets $X$ at a point. Hence each pair of points in $I$
determines a point of intersection in $X,$ and because $I$ is independent,
these points are distinct. The $\binom {d}{2}$ points in $X,$ together with
the $d$ points in $I,$ form an $M(K_{d+1}).$

If $M$ has no $M(K_5)$-submatroid, then $d \le 3,$ and $D$ equals a point,
two points, three independent points, or a $4$-circuit, corresponding to
the four cases listed in the lemma.
\end{proof}

A simple chordal graph $G$ can be built beginning with a vertex and repeatedly adding a
new vertex and all edges from that vertex to a complete subgraph.  This
construction yields a maximal chain of modular flats and hence, the
cycle matroid $M$ of $G$ is supersolvable.  On the other hand, Lemma~\ref{modular}
implies that if the cycle matroid of a graph $G$ is supersolvable, then
$G$ is chordal. We can now restate Theorem~\ref{main} for matroids.

\begin{theorem} \label{mainmatroid}
Let $M$ be a simple cographic matroid.
The following conditions are equivalent.

\begin{enumerate}
\item  The characteristic roots of $M$ are all integers.
\item $M$ can be constructed by taking parallel connections of copies of points, $M(K_3)$'s, or $M(K_4)$'s at the empty set,
a point or a $3$-point line, with the restriction that no line in an $M(K_4)$
can be used more than once in a parallel connection.
\item  $M$ is the cycle matroid of a planar chordal graph.
\item  $M$ is supersolvable.
\end{enumerate}
\end{theorem}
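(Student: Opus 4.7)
The plan is to establish the four-way equivalence by first invoking Theorem~\ref{main} for (1) $\Leftrightarrow$ (3), then closing a cycle (3) $\Rightarrow$ (4) $\Rightarrow$ (1), and finally adjoining condition (2) through (4) $\Leftrightarrow$ (2) with the help of Lemma~\ref{modular}. Throughout, I will use the fact that a cographic matroid is binary and excludes $M(K_5)$, $M(K_{3,3})$, $F_7$, and $F_7^{*}$ as minors (Tutte's excluded-minor characterization), and that this class is minor-closed, which permits clean induction.

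For (1) $\Rightarrow$ (3), I cite Theorem~\ref{main}. The reverse (3) $\Rightarrow$ (1) is the classical factorization of the chromatic polynomial of a chordal graph via the cliques in a simplicial elimination ordering, which gives $\chi(M(H);\lambda) = \lambda \prod_i(\lambda - k_i)$ for chordal $H$. For (3) $\Rightarrow$ (4), I use the remark made just before the theorem statement: the simplicial construction of a chordal graph $H$ yields a maximal chain of modular flats in $M(H)$, because for a simplicial vertex $v$ of $H$ the edges of $H-v$ form a modular flat of $M(H)$ (the edges from $v$ land inside a single clique, which supplies the required intersection with any line). For (4) $\Rightarrow$ (1), I apply Stanley's factorization theorem \cite{MR0295976}: a maximal modular chain $\emptyset = X_0 \subset X_1 \subset \cdots \subset X_r = M$ forces
\begin{equation*}
\chi(M;\lambda) = \prod_{i=1}^{r}\bigl(\lambda - (|X_i| - |X_{i-1}|)\bigr),
\end{equation*}
so the characteristic roots are non-negative integers.

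For (4) $\Rightarrow$ (2), I induct on the rank of $M$. Taking the top modular copoint $X_{r-1}$ in a maximal modular chain, the complement $D$ has rank at most 3, and since $F_7$ is excluded, Lemma~\ref{modular} exhibits $M$ as a parallel connection of $M|X_{r-1}$ with one of: a point (at the empty set), $M(K_3)$ (at a point), or $M(K_4)$ (at a 3-point line). The restriction $M|X_{r-1}$ is again cographic (minor-closure) and the truncated chain $X_0 \subset \cdots \subset X_{r-1}$ certifies its supersolvability, so the induction hypothesis supplies the full parallel-connection decomposition. For (2) $\Rightarrow$ (4), I note that the building blocks (a point, $M(K_3)$, $M(K_4)$) are each trivially supersolvable, and that parallel connection at a point or at a 3-point line preserves supersolvability, since both identification flats are modular in each of the two pieces and the two modular chains concatenate through the identification flat into a maximal modular chain of the parallel connection.

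The main obstacle is matching the uniqueness restriction in (2) --- that no line of an $M(K_4)$ summand be used more than once in a parallel connection --- with the structural content of the other conditions. In the direction (4) $\Rightarrow$ (2), simplicity of $M$ forces this restriction, because reusing the same 3-point line in two distinct parallel connections would glue the same modular line twice and either collapse the simple structure or violate the binary/cographic hypothesis; in the direction (3), this restriction reflects the fact that a triangular face of a planar chordal graph is subdivided at most once during the simplicial construction. Tracking this bookkeeping carefully across the three descriptions is the most delicate part, but once it is in place the cycle of implications closes cleanly.
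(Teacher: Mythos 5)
Your proposal is correct in substance but routes the equivalences differently from the paper. The paper closes a single cycle $(1)\Rightarrow(2)\Rightarrow(3)\Rightarrow(4)\Rightarrow(1)$: it obtains $(1)\Rightarrow(2)$ by re-running the induction of Theorem~\ref{main} with the parallel-connection description as the inductive hypothesis, gets $(2)\Rightarrow(3)$ by translating the parallel-connection building blocks into the vertex-by-vertex construction of $2$-connected planar chordal graphs described in the introduction, and then uses the chordal-implies-supersolvable and Stanley factorization facts for the last two links. You instead take $(1)\Leftrightarrow(3)$ directly from Theorem~\ref{main} plus the chordal factorization, run $(3)\Rightarrow(4)\Rightarrow(1)$, and attach $(2)$ via $(4)\Leftrightarrow(2)$. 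Your $(4)\Rightarrow(2)$ --- peeling off the top modular copoint and invoking Lemma~\ref{modular}, with $F_7$ excluded by regularity --- is a clean and arguably more self-contained use of that lemma than anything in the paper, and the induction on $M|X_{r-1}$ (cographic by minor-closure, supersolvable by truncating the chain) is sound. Two caveats. First, your justification of the ``no line used more than once'' restriction is vaguer than it needs to be: the precise obstruction, which the paper states explicitly, is that gluing three pieces at a common $3$-point line of an $M(K_4)$ produces an $M(K_{3,3})$-submatroid (three apexes over a shared triangle), and $M(K_{3,3})$ is not cographic; ``collapsing the simple structure'' is not what goes wrong. Second, your $(2)\Rightarrow(4)$ leans on the claim that parallel connection along a point or modular line of supersolvable matroids is supersolvable by concatenating modular chains; this is true but is an unproved lemma requiring some care (one must check that the joins $X\vee Y_i$ remain modular in the glued matroid), whereas the paper sidesteps it entirely by going through condition $(3)$, where gluing chordal graphs along cliques obviously preserves chordality. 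Either fix that step by citing or proving the preservation lemma, or reroute $(2)\Rightarrow(3)\Rightarrow(4)$ as the paper does.
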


\begin{proof}
To show that (1) implies (2), we use the induction argument in the proof of
Theorem~\ref{main}, using as hypothesis the description of $M$ as
a parallel connection.  Note that if we take parallel
connections of three $M(K_4)$'s at a common line, then we obtain
an $M(K_{3,3})$-submatroid, which cannot occur inside a cographic matroid.
That (2) implies (3), (3) implies (4), and (4) implies (1) follow from
results discussed earlier.
\end{proof}

Although we proved directly that cographic matroids with integer roots are the cycle matroids of planar chordal graphs, it can be useful to view this as the combination of two separate results:
\begin{enumerate}
\item A cographic matroid with integral characteristic roots is supersolvable.
\item A supersolvable cographic matroid is the cycle matroid of a planar chordal graph.
\end{enumerate}

The second of these results can be proved with a direct argument.
Kuratowski's theorem says that if a graph is not planar, then
it has a subgraph which is a series extension of 
$K_5$ or $K_{3,3}.$
Dualizing, we conclude that if $M$ is the cocycle matroid
of a non-planar graph, then there is some set of elements $X$ such that
the contraction $M/X$ is a parallel extension of
$M^\perp(K_5)$ or $M^\perp(K_{3,3})$. However,
these latter matroids are not supersolvable, and as contraction preserves supersolvability, it follows that $M$ itself is not supersolvable.
Hence, if $M$ is supersolvable and cographic, it is the cocycle matroid of some
planar graph, that is, $M$ is the cycle matroid of a planar graph, and hence
the cycle matroid of a chordal graph.

When considering generalizations of our results to other classes of matroids,
it is natural to consider the two questions separately, i.e., asking which other classes
of binary matroids have the property that only supersolvable matroids have
integral characteristic roots, and then separately characterizing the supersolvable
matroids in the class. Theorem~\ref{mainmatroid}
and Lemma~\ref{modular} suggest the conjecture that
a binary matroid with no $M(K_5)$-minor with integral characteristic roots
is supersolvable.
%
%

We end by mentioning that if we remove the restriction that the matroid
is binary, there are many non-supersolvable matroids with
integral characteristic roots.  
See, for example, \cite{MR1620842}.   In the context of matroids,
chromatic and flow polynomials would seem to be very special cases.


\subsection*{Acknowledgement}

The authors gratefully acknowledge the contribution of
Maria Monks who alerted us to an error in an earlier version of this paper.

\bibliographystyle{amsplain}
\bibliography{gordonmaster}

\providecommand{\bysame}{\leavevmode\hbox to3em{\hrulefill}\thinspace}
\providecommand{\MR}{\relax\ifhmode\unskip\space\fi MR }
\providecommand{\MRhref}[2]{%
  \href{http://www.ams.org/mathscinet-getitem?mr=#1}{#2}
}
\providecommand{\href}[2]{#2}
\begin{thebibliography}{10}

\bibitem{MR0357163}
Tom Brylawski, \emph{Modular constructions for combinatorial geometries},
  Trans. Amer. Math. Soc. \textbf{203} (1975), 1--44. \MR{MR0357163 (50
  \#9631)}

\bibitem{MR1165543}
Thomas Brylawski\phantom{1} and James Oxley, \emph{The {T}utte polynomial and
  its applications}, Matroid applications, Encyclopedia Math. Appl., vol.~40,
  Cambridge Univ. Press, Cambridge, 1992, pp.~123--225. \MR{MR1165543
  (93k:05060)}

\bibitem{MR0215744}
Henry~H. Crapo, \emph{A higher invariant for matroids}, J. Combinatorial Theory
  \textbf{2} (1967), 406--417. \MR{MR0215744 (35 \#6579)}

\bibitem{MR1802609}
Ottavio~M. D'Antona, Carlo Mereghetti, and Fabio Zamparini, \emph{The 224
  non-chordal graphs on less than 10 vertices whose chromatic polynomials have
  no complex roots}, Discrete Math. \textbf{226} (2001), no.~1-3, 387--396.
  \MR{MR1802609 (2001i:05069)}

\bibitem{MR1489869}
F.~M. Dong and K.~M. Koh, \emph{Non-chordal graphs having integral-root
  chromatic polynomials}, Bull. Inst. Combin. Appl. \textbf{22} (1998), 67--77.
  \MR{MR1489869 (98k:05064)}

\bibitem{MR1887942}
F.~M. Dong, K.~L. Teo, K.~M. Koh, and M.~D. Hendy, \emph{Non-chordal graphs
  having integral-root chromatic polynomials. {II}}, Discrete Math.
  \textbf{245} (2002), no.~1-3, 247--253. \MR{MR1887942 (2002m:05080)}

\bibitem{fowler98}
Thomas Fowler, \emph{Unique coloring of planar graphs}, Ph.D. thesis, Georgia
  Institute of Technology, 1998.

\bibitem{MR2190791}
Santos Hern{\'a}ndez and Florian Luca, \emph{Integer roots chromatic
  polynomials of non-chordal graphs and the {P}rouhet-{T}arry-{E}scott
  problem}, Graphs Combin. \textbf{21} (2005), no.~3, 319--323. \MR{MR2190791
  (2006k:05085)}

\bibitem{MR2005532}
Bill Jackson, \emph{Zeros of chromatic and flow polynomials of graphs}, J.
  Geom. \textbf{76} (2003), no.~1-2, 95--109, Combinatorics, 2002 (Maratea).
  \MR{MR2005532 (2004h:05048)}

\bibitem{MR1411690}
Joseph P.~S. Kung, \emph{Critical problems}, Matroid theory ({S}eattle, {WA},
  1995), Contemp. Math., vol. 197, Amer. Math. Soc., Providence, RI, 1996,
  pp.~1--127. \MR{MR1411690 (97k:05049)}

\bibitem{MR1620842}
\bysame, \emph{A geometric condition for a hyperplane arrangement to be free},
  Adv. Math. \textbf{135} (1998), no.~2, 303--329. \MR{MR1620842 (2000f:05023)}

\bibitem{MR1900006}
\bysame, \emph{Curious characterizations of projective and affine geometries},
  Adv. in Appl. Math. \textbf{28} (2002), no.~3-4, 523--543, Special issue in
  memory of Rodica Simion. \MR{MR1900006 (2003c:51008)}

\bibitem{MR0174487}
Gian-Carlo Rota, \emph{On the foundations of combinatorial theory. {I}.
  {T}heory of {M}\"obius functions}, Z. Wahrscheinlichkeitstheorie und Verw.
  Gebiete \textbf{2} (1964), 340--368 (1964). \MR{MR0174487 (30 \#4688)}

\bibitem{MR0295976}
Richard~P. Stanley, \emph{Modular elements of geometric lattices}, Algebra
  Universalis \textbf{1} (1971/72), 214--217. \MR{MR0295976 (45 \#5037)}

\bibitem{MR0061366}
W.~T. Tutte, \emph{A contribution to the theory of chromatic polynomials},
  Canadian J. Math. \textbf{6} (1954), 80--91. \MR{MR0061366 (15,814c)}

\end{thebibliography}

\end{document}